\documentclass[a4paper,12pt]{amsart}
\usepackage{amssymb}
\usepackage{amsmath, amsthm, amscd, amsfonts, amssymb, graphicx, color}
\usepackage{amsmath, amsthm, amscd, amsfonts, amssymb, graphicx, color}

\usepackage[cp1250]{inputenc}
\usepackage[T1]{fontenc}

\textheight 22.5truecm \textwidth 14.5truecm
\setlength{\oddsidemargin}{0.35in}\setlength{\evensidemargin}{0.35in}

\setlength{\topmargin}{-.5cm}

\newtheorem{theorem}{Theorem}[section]
\newtheorem{lemma}[theorem]{Lemma}

\theoremstyle{definition}
\newtheorem{definition}[theorem]{Definition}
\newtheorem{example}[theorem]{Example}

\theoremstyle{remark}
\newtheorem{remark}[theorem]{Remark}
\numberwithin{equation}{section}

\begin{document}
	\author{Stefan Ivkovi\'{c} }
	
	\vspace{15pt}
	
	\title{Dynamics on weighted solid Banach function spaces}

	\maketitle
	\begin{abstract}
		The dynamics of weighted translation operators on Lebesgue spaces, Orlicz spaces, and in general on solid Banach function spaces have been studied in numerous papers. Recently, the dynamics of weighted translations on weighted Orlicz spaces have also been studied by Chen and others, extending thus the previous results concerning the dynamics of weighted translations on classical Orlicz spaces. The main idea of this paper is to obtain a generalization of these results to the case of general weighted solid Banach function spaces. More precisely, in this paper, we characterize disjoint topologically transitive weighted composition operators on weighted solid Banach function spaces. This approach has applications in the dynamics of weighted translations on weighted Morrey spaces.
	\end{abstract}
	
	\vspace{15pt}
	
	\begin{flushleft}
		\textbf{Keywords} Hypercyclic operator, supercyclic operator, weighted solid Banach function spaces, topological transitivity, weighted composition operator 
		
	\end{flushleft} 
	
	\vspace{15pt}
	
	\begin{flushleft}
		\textbf{Mathematics Subject Classification (2010)} Primary MSC 47A16, Secondary MSC 54H20.
	\end{flushleft}
	
	\vspace{30pt}
	
	\section{Introduction and preliminaries}

In this paper, the set of all Borel measurable complex-valued functions on a topological space $X$ is denoted by $\mathcal M_0(X) .$ Also, $\chi_A$ denotes the characteristic function of a set $A$. We recall the following definitions from \cite{solid}.
\begin{definition}
	Let $X$ be a topological space and $\mathcal{F}$ be a linear subspace of $\mathcal M_0(X)$. If $\mathcal F$ equipped with a given norm $\|\cdot\|_{\mathcal F}$ is a Banach space, we say that $\mathcal F$ is a \emph{Banach function space on $X$}.
\end{definition}
\begin{definition}
	Let $\mathcal{F}$ be a Banach function space on a topological space $X$, and  $\alpha:X\longrightarrow X $ be a homeomorphism. We say that $\mathcal{F}$ is \emph{$\alpha$-invariant} if for each 	$f\in \mathcal{F} $ we have 
	$f\circ \alpha^{\pm 1} \in \mathcal{F}$ and 	$\|f\circ \alpha^{\pm 1} \|_{\mathcal{F}}=\|f\|_{\mathcal{F}}$.
\end{definition}

\begin{definition} 
	Let $X$ be a topological space. Let $\alpha:X\longrightarrow X$ be invertible, and $\alpha,\alpha^{-1}$ be Borel measurable. We say that $\alpha$ is \emph{aperiodic} if for each compact subset $K$ of $X$, there exists a constant $N>0$ such that for each $n\geq N$, we have $K \cap \alpha^{n}(K)=\varnothing$, where $\alpha^{n}$ means the $n$-fold combination of $\alpha$. 
\end{definition}
Throughout this paper we will always assume that $ \alpha $ is an aperiodic homeomorphism of a topological space $X.$\\
Next, we recall also the following definition.
\begin{definition}
	A Banach function space $\mathcal{F}$ on $X$ is called \emph{solid} if for each $f\in \mathcal{F}$ and $g\in\mathcal M_0(X)$, satisfying $|g|\leq |f|$, we have $g \in \mathcal{F}$
	and $\|g\|_{\mathcal{F}}\leq \|f\|_{\mathcal{F}}$.
\end{definition}
For the next results we shall also assume that the following conditions from \cite{solid} on the Banach function space $\mathcal{F}$ hold.

\begin{definition}\label{condition}
	Let $X$ be a topological space, $\mathcal F$ be a Banach function space on $X$, and $\alpha$ be an aperiodic homeomorphism of $X .$ We say that $\mathcal F$ satisfies condition $\Omega_\alpha$ if the following conditions hold:
	~\	\begin{enumerate}
		\item $\mathcal{F}$ is solid and $\alpha$-invariant;
		\item for each compact set	$E\subseteq X$ we have $\chi_{E}\in \mathcal{F}$;
		\item $\mathcal{F}_{bc}$ is dense in $\mathcal F$, where $\mathcal{F}_{bc}$ is the set of all bounded compactly supported functions in $\mathcal{F}$.
	\end{enumerate}
\end{definition}

From now on, we shall assume that $\mathcal{F}$ is a Banach function space satisfying the conditions of all the definitions above. For a measurable positive function $ w $ on $X ,$ we let $ w^{-1} := \frac{1}{w}  .$ If $ w  $ is a positive measurable function on $  X $ such that $ w,w^{-1}  $ are bounded, then $ T_{\alpha,w} $ will denote the weighted composition operator on $ \mathcal{F}  $ defined by $ T_{\alpha,w} (f) = w \cdot (f \circ \alpha)  $ for all $ f \in  \mathcal{F}.  $  In this case,  $ T_{\alpha,w} $ is invertible, and we will let $ S_{\alpha,w} $  denote its inverse.  
\begin{enumerate}
	\item It is easy to verify that $T_{\alpha,w}$ and $S_{\alpha,w}$ are bounded linear operators on $\mathcal{F}$. From now on, we will simply denote $T:=T_{\alpha,w}$ and $S:=S_{\alpha,w}$.
	
	By calculation, we can confirm that for all $n \in \mathbb{N}$ and $f \in \mathcal{F}$, we have:
	
	$$T^{n}f = \prod_{j=0}^{n-1} (w \circ \alpha^{j}) . (f \circ \alpha^{n})$$
	and
	$$S^{n}f = \prod_{j=1}^{n} (w\circ \alpha^{-j})^{-1} \cdot (f \circ \alpha^{-n}).$$
	
	Hence, since $\|.\|_{\mathcal{F}}$ is $\alpha$-invariant, we obtain:
	$$\|T^{n}f\|_{\mathcal{F}} = \| (T^{n}f) \circ \alpha^{-n} \|_{\mathcal{F}} = \left\| \left( \prod_{j=1}^{n} (w \circ \alpha^{-j}) \right) . f \right\|_{\mathcal{F}}$$
	
	and 
	
	$$\|S^{n}f\|_{\mathcal{F}} = \| (S^{n}f) \circ \alpha^{n} \|_{\mathcal{F}} = \left\| \left( \prod_{j=0}^{n-1} (w \circ \alpha^{j})^{-1} \right) . f \right\|_{\mathcal{F}}.$$
\end{enumerate}

At the end of this section, we recall also the following definition.
\begin{definition} 
	Let $\mathcal X$ be a Banach space. A sequence $(T_n)_{n\in \mathbb{N}_0}$  of operators in $B(\mathcal X)$ is called {\it topologically transitive} if for each non-empty open subsets $U,V$ of
	${\mathcal X}$, $T_n(U)\cap V\neq \varnothing$ for some $n\in \mathbb{N}$. If $T_n(U)\cap V\neq \varnothing$ holds from some $n$ onwards, then
	$(T_n)_{n\in \mathbb{N}_0}$ is called {\it topologically mixing}.\\
	We say that $\{T_{n}\}_{n\in \mathbb{N}_0}$ is \textit{topologically transitive for positive supercyclicity} if for each non-empty open subsets $O_{1}$ and $O_{2}$ of $\mathcal{X}$, there exists some $n \in \mathbb{N}$ and some $\lambda \in \mathbb{R}^{+}$ such that $\lambda T_{n}(O_{1}) \cap O_{2} \neq \emptyset$\\
	A single operator $T$ in $B(\mathcal X)$ is called topologically transitive (respectively topologically mixing or topologically transitive for positive supercyclicity) if the sequence $(T^n)_{n\in \mathbb{N}_0}$ is topologically transitive (respectively topologically 
	mixing or topologically transitive for positive supercyclicity). \\
\end{definition}
\begin{remark} \label{remark-semi-transitive}
	
	If $T \in B(\mathcal{X}) $ is invertible  and topologically transitive for positive supercyclicity,  then, given two non-empty open subsets $ O_1 $ and $ O_2 $ of $ \mathcal{X} ,$ there exists a strictly  increasing sequence $\lbrace n_{k}\rbrace_{k} \subseteq \mathbb{N}$ and sequence $\lbrace \lambda_{k} \rbrace \subseteq \mathbb{R}^{+}$ such that for all $k \in \mathbb{N}$ we have that $$\lambda_{k} T^{n_{k}}(O_{1}) \cap O_{2} \neq \varnothing .$$ Indeed, since $T$ is topologically transitive for positive supercyclicity, we can find some $ n_{1} \in \mathbb{N}        $ and some  $  \lambda_{1} \in \mathbb{R}^{+}  $  such that   $\lambda_{1} T^{n_{1}}(O_{1}) \cap O_{2} \neq \varnothing        .$ Now, since   $  \lambda_{1} T^{n_{1}}      $   is invertible, it is an open map, hence   $  \lambda_{1} T^{n_{1}}(O_{1})      $  is open. Therefore, there exists some   $ \tilde{n}_{2} \in \mathbb{N}       $  and some   $\tilde{\lambda}_{2} \in  \mathbb{R}^{+} $  such that   $\tilde{\lambda}_{2} T^{{\tilde{n}}_{2}}(\lambda_{1} T^{n_{1}}(O_{1})) \cap O_{2} \neq \varnothing .$   Put   $n_{2}=n_{1}+\tilde{n}_{2} $  and   $  \lambda_{2} = \lambda_{1} \tilde{\lambda}_{2} .$ Proceeding inductively, we can construct the desired sequences   $ \lbrace n_{k} \rbrace_{k} $  and   $  \lbrace \lambda_{k} \rbrace_{k} .$ 
\end{remark}

\begin{definition}\label{defd--hyper}
	Let
	$N\geq 2$, and 	$T_{1},T_{2},\ldots,T_{N}$ 
	be bounded linear operators
	acting on a Banach space
	${\mathcal X}$. 
	\begin{enumerate}
		\item The finite sequence $T_{1},T_{2},\ldots,T_{N}$ is called \emph{disjoint hypercyclic} or simply \emph{d--hypercyclic} 
		if there exists an element $x\in \mathcal{X}$
		such that the set 
		\begin{equation}\label{15}
			\{(x,x,\ldots,x),(T_{1}x,T_{2}x,\ldots,T_{N}x),(T_{1}^{2}x,T_{2}^{2}x,\ldots,T_{N}^{2}x),\ldots\}
		\end{equation}
		is dense in $\mathcal{X}^{N}$. In this case, the element $x$ is called 
		a
		\emph{d--hypercyclic vector}. \\
		If the set of all d-hypercyclic vectors for $T_{1},T_{2},\ldots,T_{N}$ is dense in $X ,$ then we say that $T_{1},T_{2},\ldots,T_{N}$ are \emph{densely d-hypercyclic }.\\
		The finite sequence $T_{1},T_{2},\ldots,T_{N}$ is called \emph{disjoint supercyclic} or simply \emph{d--supercyclic} 
		if there exists an element $x\in \mathcal{X}$ and a sequence $ \lbrace \lambda_n \rbrace_n \subseteq \mathbb{C} $ 
		such that the set 
		\begin{equation}\label{15}
			\{(x,x,\ldots,x), \lambda_1(T_{1}x,T_{2}x,\ldots,T_{N}x),\lambda_2(T_{1}^{2}x,T_{2}^{2}x,\ldots,T_{N}^{2}x),\ldots\}
		\end{equation}
		is dense in $\mathcal{X}^{N}$. In this case, the element $x$ is called 
		a
		\emph{d--supercyclic vector}. \\
		If the set of all d-supercyclic vectors for $T_{1},T_{2},\ldots,T_{N}$ is dense in $X ,$ then we say that $T_{1},T_{2},\ldots,T_{N}$ are \emph{densely d-supercyclic }.
		\item The finite sequence $T_{1},T_{2},\ldots,T_{N}$ is called \emph{disjoint topologically transitive} or simply \emph{d--topologically transitive} 
		if for any non-empty open subsets $U,V_1,\ldots,V_N$ of $\mathcal X$, 
		there exist a natural number $n\in\mathbb N$ such that
		\begin{equation}\label{00}
			U\cap T_1^{-n}(V_1)\cap\cdots\cap T_N^{-n}(V_N)\neq\varnothing.
		\end{equation}
	\end{enumerate}
\end{definition}
We provide also the following auxiliary lemma.

\begin{lemma}\label{dis-trans}
	Let
	$N\geq 2$, 	$T_{1},T_{2},\ldots,T_{N}$ 
	be invertible bounded linear operators
	acting on a Banach space
	${\mathcal X}$ and $\{r_l\}_{l=1}^N$ be a finite sequence of natural numbers such that $0<r_1<r_2<\ldots<r_N .$ Suppose that $T_1^{r_{1}}, \dots, T_N^{r_{N}}$ are disjoint topologically transitive. Then, given any non-empty open subset $\mathcal{O}$ of $\mathcal{X}$, there exists a strictly increasing sequence $\{n_k\}_k \subseteq \mathbb{N}$ such that
	\[
	\mathcal{O} \cap \left( \bigcap_{\ell=1}^{N} T_\ell^{-r_{l} n_{k}}(\mathcal{O}) \right) \neq \emptyset \quad \text{for all } k \in \mathbb{N}.
	\]
\end{lemma}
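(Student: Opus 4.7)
The plan is to iterate d-topological transitivity, at each stage shrinking the ``target'' open set so that the new witness index $\tilde n_{k+1}$ produced by the definition forces the accumulated index $n_{k+1}=n_k+\tilde n_{k+1}$ both to satisfy the required intersection condition (via the semigroup identity $T_\ell^{r_\ell n_k}\circ T_\ell^{r_\ell\tilde n_{k+1}}=T_\ell^{r_\ell n_{k+1}}$) and to be strictly larger than $n_k$. The spirit is exactly that of Remark \ref{remark-semi-transitive}, only adapted to the disjoint setting.

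First I would invoke the definition of disjoint topological transitivity of $T_1^{r_1},\dots,T_N^{r_N}$ with $U=V_1=\dots=V_N=\mathcal O$ to produce $n_1\in\mathbb N$ with
\[
\mathcal O\cap \bigcap_{\ell=1}^{N}(T_\ell^{r_\ell})^{-n_1}(\mathcal O)\neq\varnothing.
\]
Then I would proceed by induction. Assume $n_1<n_2<\cdots<n_k$ have been constructed, each satisfying the analogous non-emptiness, and form
\[
\mathcal O_k\;:=\;\mathcal O\cap \bigcap_{\ell=1}^{N}(T_\ell^{r_\ell})^{-n_k}(\mathcal O).
\]
This is open (finite intersection of preimages of open sets under continuous maps) and non-empty by the induction hypothesis. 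Applying d-topological transitivity with $U=V_1=\dots=V_N=\mathcal O_k$ yields $\tilde n_{k+1}\in\mathbb N$ with
\[
\mathcal O_k\cap \bigcap_{\ell=1}^{N}(T_\ell^{r_\ell})^{-\tilde n_{k+1}}(\mathcal O_k)\neq\varnothing,
\]
and I set $n_{k+1}:=n_k+\tilde n_{k+1}$. Since $\tilde n_{k+1}\ge 1$, one has $n_{k+1}>n_k$, producing the required strict monotonicity.

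To close the induction I would verify the intersection condition at stage $k+1$. Picking $x$ from the non-empty set above, one has $x\in\mathcal O_k\subseteq\mathcal O$; moreover, for each $\ell$, the point $T_\ell^{r_\ell\tilde n_{k+1}}(x)$ lies in $\mathcal O_k$, hence in $(T_\ell^{r_\ell})^{-n_k}(\mathcal O)$, and applying $T_\ell^{r_\ell n_k}$ to this inclusion and using $T_\ell^{r_\ell n_k}\circ T_\ell^{r_\ell\tilde n_{k+1}}=T_\ell^{r_\ell n_{k+1}}$ gives $T_\ell^{r_\ell n_{k+1}}(x)\in\mathcal O$. Hence
\[
x\in\mathcal O\cap \bigcap_{\ell=1}^{N}(T_\ell^{r_\ell})^{-n_{k+1}}(\mathcal O),
\]
which is precisely the stage $k+1$ of the desired conclusion.

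No substantive obstacle is expected: the argument is a mechanical iteration of the definition, and the hypothesis $r_1<\cdots<r_N$ plays no active role (it is recorded for later use). The only points deserving care are the openness of $\mathcal O_k$ (using continuity of the $T_\ell^{r_\ell}$, guaranteed by boundedness; invertibility is not needed here, though it is available) and the observation that $\tilde n_{k+1}\ge 1$, which is what delivers strict monotonicity of $\{n_k\}_k$.
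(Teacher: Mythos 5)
Your proof is correct and follows essentially the same route as the paper's: iterate the definition of disjoint topological transitivity and add the exponents, exactly in the spirit of Remark \ref{remark-semi-transitive}. The only (immaterial) difference is that you feed the single open set $\mathcal{O}_k=\mathcal{O}\cap\bigcap_{\ell}T_\ell^{-r_\ell n_k}(\mathcal{O})$ into all $N+1$ slots, whereas the paper uses $U=\mathcal{O}$ and $V_\ell=T_\ell^{-r_\ell n_k}(\mathcal{O})$; both yield $n_{k+1}=n_k+\tilde n_{k+1}>n_k$ with the required non-empty intersection.
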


\begin{proof}
	Since $T_1^{r_{1}}, \dots, T_N^{r_{N}}$ are disjoint topologically transitive, there exists some $n_1 \in \mathbb{N}$ such that
	\[
	\mathcal{O} \cap \left( \bigcap_{\ell=1}^{N} T_\ell^{-r_{l} n_{1}}(\mathcal{O}) \right) \neq \emptyset.
	\]
	Now, since $T_\ell^{-r_{l} n_{1}}$ is an open map for all $\ell \in \{1, \dots, N\}$, we have that $T_\ell^{-r_{l} n_{1}}(\mathcal{O})$ is open and non-empty for each $\ell \in \{1, \dots, N\}$, hence there exists some $\tilde{n}_2 \in \mathbb{N}$ such that 
	\[
	\mathcal{O} \cap \left( \bigcap_{\ell=1}^N T_\ell^{-r_{l} \tilde{n}_{2}} \left( T_\ell^{-r_{l} {n}_{2}}(\mathcal{O}) \right) \right) \neq \emptyset.
	\]
	Put $n_2 = \tilde{n}_2 + n_1$. Proceeding inductively, we deduce the statement of the lemma. 
	
\end{proof}

Next, we will give the concept of disjoint topological transitivity for positive supercyclicity, which is an extension of topological transitivity for positive supercyclicity. The following definition is in fact a modified version of \cite[Definition 1.2]{novo-prvi}.

\begin{definition} We say the operators  $ T_{1},T_{1}, \dots , T_{N}  \in \mathcal{B} (X)  $  are disjoint topologically transitive for positive supercyclicity if for every non-empty open subsets   $V_{0}, V_{1}, \dots V_{N}      $  of $X$, there exist   $ n \in \mathbb{N}     $   and   $\lambda \in \mathbb{R}^{+}      $  such that
	$$  V_{0} \cap (  \lambda T_{1}^{-n} ) (V_{1})  \cap \dots \cap (  \lambda T_{N}^{-n} ) (V_{N}) \neq \varnothing .   $$
\end{definition}
\begin{definition}
	Let $\mathcal{S}$ be a set, $\mathcal{L}$ be a family of subsets of $\mathcal{S} $   
	and $\{ T_{t,1} \}_{t \in \mathcal{S}}, \dots,
	\{ T_{t,N} \}_{t \in \mathcal{S}}$ be families of bounded linear operators on a Banach space $X$.  
	We say that $\{ T_{t,1} \}_{t \in \mathcal{S}}, \dots, \{ T_{t,N} \}_{t \in \mathcal{S}}$ are disjoint $\mathcal{L}$-semi-transitive, or shortly $d\mathcal{L}$-semi-transitive, if  
	for every collection of non-empty open subsets $\mathcal{O}, V_{1}, \dots, V_{N}$ of $X$,  
	there exists some $ F \in \mathcal{L} $  such that for all  $ t \in F$ there exists some $  \lambda_{t} \in \mathbb{R}^+  $  satisfying that 
	$$
	\mathcal{O} \cap \lambda_{t} T_{t,1}^{-1} (V_{1}) \cap \dots \cap \lambda_{t} T_{t,N}^{-1}(V_{N}) \neq \varnothing.
	$$
	
\end{definition}
We recall also that the families  $\{ T_{t,1} \}_{t \in S}, \dots, \{ T_{t,N} \}_{t \in S}$ are said to be \textit{disjoint $\mathcal{L}$-transitive, or shortly $d\mathcal{L}$-transitive}, if  
for every collection of non-empty open subsets $\mathcal{O}, V_{1}, \dots, V_{N}$ of $X$,  
there exists some $ F \in \mathcal{L} $  such that for all  $ t \in F$ we have
$$
\mathcal{O} \cap  T_{t,1}^{-1} (V_{1}) \cap \dots \cap  T_{t,N}^{-1}(V_{N}) \neq \varnothing.
$$

\section{Main results}
If $X$ is a topological space and $ \alpha $ is a homeomorphism on $ X ,$ then a weight on $X$ with respect to $\alpha$ is a continuous function $\eta : X \to (0, \infty)$ which satisfies
$$
\eta(\alpha(x)) \leq K_{\alpha} \eta(x) \quad \text{and} \quad \eta({\alpha}^{-1}(x)) \leq K_{\alpha} \eta(x) \quad \text{for all } x \in X,
$$
and some constant $K_{\alpha}
> 0$.\\
Below are some examples of such functions.

\begin{example}Let $X = \mathbb{R}$ and $p > 1$. Put
	$$
	\eta(x) = 
	\begin{cases}
		|x|^{p}\;\;\;  \text{for} & x \in  (1, \infty), \\
		1\;\;\;\;\;\;\; \text{for} & x \in [-1, 1], \\
		\frac{1}{|x|^{p}}\;\;\;  \text{for} & x \in (-\infty, -1).
	\end{cases}
	$$
	Then $\eta$ is a weight on $\mathbb{R}$ with respect to the translation $\alpha$ on $\mathbb{R}$ given by
	$$
	\alpha(x) = x + 1, \quad \text{for all } x \in \mathbb{R}.
	$$ Similar conclusion holds if we instead consider  $\alpha(x) = x - 1$, for all $x \in \mathbb{R}$.
\end{example}

\begin{example}\label{ilustracija}
	Let $X = \mathbb{R}^{n}$ for any natural number $n$ and $p > 1$. Put
	$$
	\eta(x) = 
	\begin{cases}
		
		1\;\;\;\;\;\;\; \text{for} & \|x \| \leq 1, \\
		\frac{1}{\|x \|^{p}}\;\;\;  \text{for} & \|x \| > 1.
	\end{cases}
	$$	If $ a \in \mathbb{R}^{n} $ with $ \|a \| =1 $ and $\alpha(x) = x - a$ for all $ x \in \mathbb{R}^{n} ,$ then $ \eta $ is a weight with respect to $ \alpha .$
\end{example}

The next example is motivated by \cite{banach}.

\begin{example}
	Let $G$ be a locally compact group and  $a \in G$ be a fixed element of $G$. Define $\alpha : G \to G$ by 
	$$
	\alpha(x) = ax \quad \text{for all } x \in G.
	$$
	If $\eta : G \to (0, \infty)$ is a continuous function which satisfies
	$$
	\eta(xy) \leq \eta(x)\eta(y) \quad \text{for all } x, y \in G,
	$$
	then $\eta$ is a weight on $G$ with respect to $\alpha.$
\end{example}

Let now $w : X \to (0, \infty)$ be a measurable function such that $w$ and $w^{-1}$ are bounded. We define the weighted solid Banach function space $\mathcal{F}_{\eta}$ on $X$ by
$$
\mathcal{F}_{\eta} = \left\{ f \in \mathcal{M}_{0}(X) \ \middle| \ f \eta \in \mathcal{F} \right\}
,$$
and we equip $\mathcal{F}_{\eta}$ with the norm $\|\cdot\|_{\mathcal{F}_{\eta}}$ given by
$$
\|f\|_{\mathcal{F}_{\eta}} = \|f \eta\|_{\mathcal{F}} \quad \text{for all } f \in \mathcal{F}_{\eta}.
$$

\begin{remark}\label{invariance} We notice that, while $\|\cdot\|_{\mathcal{F}}$ is $\alpha-$invariant, the norm $\|\cdot\|_{\mathcal{F}_{\eta}}$ does not need to be $\alpha-$invariant.
	
	\end{remark}

As in the previous section, we consider again the weighted composition operator $T_{\alpha, w}$ on $\mathcal{F}_{\eta}$ given by
$$
T_{\alpha, w}(f) = w \cdot (f \circ a) \quad \text{for all } f \in \mathcal{F}_{\eta}.
$$

\begin{lemma}
	The operator $T_{\alpha,w}$ is a bounded, linear self-mapping on $\mathcal{F}_{\eta}$.
\end{lemma}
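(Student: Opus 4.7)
The plan is to check linearity (which is immediate from the pointwise definition of $T_{\alpha,w}$) and then estimate $\|T_{\alpha,w}f\|_{\mathcal{F}_\eta} = \|w\cdot(f\circ\alpha)\cdot\eta\|_{\mathcal{F}}$ in terms of $\|f\|_{\mathcal{F}_\eta} = \|f\eta\|_{\mathcal{F}}$. Since $w$ is bounded, solidity lets me factor out $\|w\|_\infty$ and reduce to comparing $(f\circ\alpha)\cdot\eta$ with $(f\eta)\circ\alpha$.

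The key observation is that the second inequality in the definition of weight, $\eta(\alpha^{-1}(x))\le K_\alpha\eta(x)$, gives (after the substitution $x\mapsto\alpha(x)$) the pointwise bound
\[
\eta(x)\le K_\alpha\,\eta(\alpha(x))\qquad\text{for all }x\in X.
\]
Hence $|w(x)\,f(\alpha(x))\,\eta(x)|\le \|w\|_\infty K_\alpha |f(\alpha(x))\eta(\alpha(x))|=\|w\|_\infty K_\alpha\bigl|((f\eta)\circ\alpha)(x)\bigr|$. By solidity of $\mathcal{F}$, this forces $T_{\alpha,w}f\cdot\eta\in\mathcal{F}$, and
\[
\|T_{\alpha,w}f\|_{\mathcal{F}_\eta}\le \|w\|_\infty K_\alpha\,\|(f\eta)\circ\alpha\|_{\mathcal{F}}.
\]

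Finally, the $\alpha$-invariance of $\|\cdot\|_{\mathcal{F}}$ (which, by Remark \ref{invariance}, holds for the unweighted norm even though it may fail for $\|\cdot\|_{\mathcal{F}_\eta}$) gives $\|(f\eta)\circ\alpha\|_{\mathcal{F}}=\|f\eta\|_{\mathcal{F}}=\|f\|_{\mathcal{F}_\eta}$, so
\[
\|T_{\alpha,w}f\|_{\mathcal{F}_\eta}\le \|w\|_\infty K_\alpha\,\|f\|_{\mathcal{F}_\eta},
\]
proving boundedness (with operator norm at most $\|w\|_\infty K_\alpha$) and self-mapping simultaneously.

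There is no real obstacle here; the only subtle point is recognizing that the two ``submultiplicative'' inequalities in the weight definition are precisely what is needed to transfer $\eta$ across $\alpha$, so that the $\alpha$-invariance of $\mathcal{F}$ can be invoked. The boundedness of $w^{-1}$ plays no role in this lemma (it will be used later when analyzing the inverse $S_{\alpha,w}$).
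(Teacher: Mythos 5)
Your proof is correct and follows essentially the same route as the paper: both arguments use the weight inequality $\eta\circ\alpha^{-1}\le K_\alpha\eta$ (equivalently, $\eta\le K_\alpha(\eta\circ\alpha)$ after substitution) to dominate $|T_{\alpha,w}f\cdot\eta|$ pointwise by $M_wK_\alpha|(f\eta)\circ\alpha|$, and then conclude via solidity and the $\alpha$-invariance of $\|\cdot\|_{\mathcal F}$, arriving at the same bound $\|T_{\alpha,w}\|\le M_wK_\alpha$. The only cosmetic difference is that the paper writes the pointwise estimate as a composition pulled back through $\alpha$, whereas you substitute directly; the content is identical.
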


\begin{proof}
	Put $M_w = \sup_{x \in X} w(x) .$ Then, since $\eta \circ \alpha \leq K_\alpha \eta$ and $\eta \circ \alpha^{-1} \leq K_\alpha \eta$,\\ we get
	$$
	T_{\alpha,w}(f) \cdot \eta =\eta \cdot w \cdot (f \circ \alpha)   = ((\eta \circ \alpha^{-1}) \cdot (w \circ \alpha^{-1}) \cdot f) \circ \alpha $$ $$
	\leq ((K_{\alpha} \eta)\cdot(w \circ \alpha^{-1})\cdot f) \circ \alpha \leq (M_w K_{\alpha} \eta \cdot f)  \circ \alpha
	$$
	for all $f \in \mathcal{F}_{\eta} .$ Since $\mathcal{F}$ is solid and $\alpha$-invariant, we deduce that
	$$
	T_{\alpha,w}(f) \cdot \eta \in \mathcal{F} \quad \text{and} \quad \| T_{\alpha,w}(f) \cdot \eta \|_{\mathcal{F}} \leq M_w K_{\alpha} \| f \cdot \eta \|_{\mathcal{F}} = M_w K_{\alpha} \|f\|_{\mathcal{F}_{\eta}},
	$$
	for all $f \in \mathcal{F}_{\eta} ,$ so $T_{\alpha}$ is a bounded linear operator on $\mathcal{F}_{\eta}$ and
	$$
	\| T_{\alpha,w} \| \leq M_w K_{\alpha}
	.$$
\end{proof}

Now we are ready to present the first theorem in this section.

\begin{theorem}\label{transitive} 
	Under the above notation and assumptions, the following statements are equivalent.
	\begin{enumerate}
		\item $T_{\alpha,w}$ is topologically transitive on $\mathcal{F}_{\eta}$.
		\item For each compact subset $K $ of $ X$, there exists a sequence of Borel subsets $\{E_{k}\}_{k}$ of $K$ and a strictly increasing sequence of natural numbers $\left\{ n_{k}\right\} _{k}$ such that
		$$
		\lim_{k \to \infty} \| \chi_{K\backslash E_{k}} \|_{\mathcal{F}} = 0
		$$
		and
		$$
		\lim_{k \to \infty} \sup_{x \in E_{k}} \left[ (\eta \circ \alpha^{n_k})(x) \cdot \left( \prod_{j=0}^{n_k - 1} (w\circ\alpha^j)^{-1}(x) \right) \right] =
		$$
		$$
		= \lim_{k \to \infty} \sup_{x \in E_{k}} \left[ (\eta \circ \alpha^{-n_k})(x) \cdot \left( \prod_{j=1}^{n_k} (w\circ\alpha^{-j})(x) \right) \right] = 0.
		$$
		If $ \alpha $ is not aperiodic, then we still have the implication $   (2)\rightarrow (1)   $ in that case.
	\end{enumerate}
\end{theorem}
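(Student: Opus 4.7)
The plan is to establish the equivalence through a Bes--Peris style transitivity criterion, applied on the dense subspace of bounded compactly supported functions in $\mathcal{F}_\eta$. Density holds because $\mathcal{F}_{bc}$ is dense in $\mathcal{F}$ by condition $\Omega_\alpha$, and $\eta$ is continuous and strictly positive, so the map $f\mapsto f\eta$ preserves boundedness and compactness of support. Mirroring the computation of the introductory section, the $\alpha$-invariance of $\|\cdot\|_\mathcal{F}$ yields
\[
\|T^nf\|_{\mathcal{F}_\eta}=\Big\|\prod_{j=1}^n(w\circ\alpha^{-j})\cdot(\eta\circ\alpha^{-n})\cdot f\Big\|_\mathcal{F},\quad
\|S^nf\|_{\mathcal{F}_\eta}=\Big\|\prod_{j=0}^{n-1}(w\circ\alpha^j)^{-1}\cdot(\eta\circ\alpha^n)\cdot f\Big\|_\mathcal{F}.
\]
For $(2)\Rightarrow(1)$, which does not use aperiodicity, given open $U,V\subseteq\mathcal{F}_\eta$ I pick bounded compactly supported $f\in U$, $g\in V$, set $K:=\mathrm{supp}(f)\cup\mathrm{supp}(g)$, and apply (2) to $K$. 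Solidity combined with the sup-bounds yields $\|T^{n_k}(f\chi_{E_k})\|_{\mathcal{F}_\eta}\le\|f\|_\infty\cdot\sup_{E_k}[(\eta\circ\alpha^{-n_k})\prod_j(w\circ\alpha^{-j})]\cdot\|\chi_K\|_\mathcal{F}\to 0$, and analogously $\|S^{n_k}(g\chi_{E_k})\|_{\mathcal{F}_\eta}\to 0$; meanwhile $\|f\chi_{K\setminus E_k}\|_{\mathcal{F}_\eta}\le\|f\|_\infty\|\eta\|_{L^\infty(K)}\|\chi_{K\setminus E_k}\|_\mathcal{F}\to 0$, and similarly for $g$. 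Setting $h_k:=f\chi_{E_k}+S^{n_k}(g\chi_{E_k})$ one has $h_k\to f\in U$ and $T^{n_k}h_k=T^{n_k}(f\chi_{E_k})+g\chi_{E_k}\to g\in V$, so $T^{n_k}U\cap V\neq\varnothing$ for large $k$.

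For $(1)\Rightarrow(2)$, I fix a compact $K$ and, given $\epsilon>0$, use aperiodicity to choose $N$ with $K\cap\alpha^{\pm n}(K)=\varnothing$ for $n\ge N$. Since $\chi_K\in\mathcal{F}_\eta$, I apply transitivity to the open ball $W:=\{h:\|h-\chi_K\|_{\mathcal{F}_\eta}<\epsilon\}$; as $N(W,W)$ is infinite for a transitive $T$, there exist $n\ge N$ and $h\in W$ with $T^nh\in W$. Write $h=\chi_K+r$, $T^nh=\chi_K+v$ with $\|r\|_{\mathcal{F}_\eta},\|v\|_{\mathcal{F}_\eta}<\epsilon$. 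Aperiodicity forces $\mathrm{supp}(T^n\chi_K)\subseteq\alpha^{-n}K$ to be disjoint from $K$; multiplying $T^nh-\chi_K=v$ by $\chi_{\alpha^{-n}K}$ and using $\chi_K\chi_{\alpha^{-n}K}=0$, $(T^n\chi_K)\chi_{\alpha^{-n}K}=T^n\chi_K$, and $(T^nr)\chi_{\alpha^{-n}K}=T^n(r\chi_K)$, solidity gives
\[
\|T^n((1+r)\chi_K)\|_{\mathcal{F}_\eta}=\|v\chi_{\alpha^{-n}K}\|_{\mathcal{F}_\eta}\le\|v\|_{\mathcal{F}_\eta}<\epsilon.
\]
Symmetrically, applying $S^n$ to $h=S^n(T^nh)$ and restricting via $\chi_{\alpha^nK}$ (also disjoint from $K$) yields
\[
\|S^n((1+v)\chi_K)\|_{\mathcal{F}_\eta}=\|r\chi_{\alpha^nK}\|_{\mathcal{F}_\eta}\le\|r\|_{\mathcal{F}_\eta}<\epsilon,
\]
so both norm bounds are extracted from the same $n$.

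Finally, I convert these norm estimates into set-wise sup bounds by a Chebyshev argument. Let $B:=\{x\in K:|r(x)|>1/2\}$; from $\chi_B\le 2|r|$ and solidity, $\|\chi_B\eta\|_\mathcal{F}\le 2\epsilon$, hence $\|\chi_B\|_\mathcal{F}\le 2\epsilon/\inf_K\eta$. On $K\setminus B$ one has $|1+r|\ge 1/2$, so by solidity $\|[(\eta\circ\alpha^{-n})\prod_{j=1}^n(w\circ\alpha^{-j})]\chi_{K\setminus B}\|_\mathcal{F}\le 2\|T^n((1+r)\chi_K)\|_{\mathcal{F}_\eta}\le 2\epsilon$, and a further Chebyshev step gives $\|\chi_{\{x\in K\setminus B:\,(\eta\circ\alpha^{-n})(x)\prod_j(w\circ\alpha^{-j})(x)>\delta\}}\|_\mathcal{F}\le 2\epsilon/\delta$; an analogous estimate holds for the $S$-weight with $B':=\{|v|>1/2\}$. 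Taking $\delta=\sqrt\epsilon$ and $E:=K\setminus\{x:(\eta\circ\alpha^{-n})(x)\prod_{j=1}^n(w\circ\alpha^{-j})(x)>\delta\text{ or }(\eta\circ\alpha^n)(x)\prod_{j=0}^{n-1}(w\circ\alpha^j)^{-1}(x)>\delta\}$ yields $\|\chi_{K\setminus E}\|_\mathcal{F}\le 4\epsilon/\inf_K\eta+4\sqrt\epsilon$ together with the uniform bound $\delta$ on both weights on $E$. Letting $\epsilon_k\to 0$ and selecting $n_k\to\infty$ from the infinite sets $N(W_k,W_k)$ (enforcing $n_k>n_{k-1}$) then produces the required sequences $(E_k),(n_k)$. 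The main obstacle I expect is the extraction step in the previous paragraph: isolating $T^n\chi_K$ from the perturbation term $T^nr$, which a priori scales with the operator norm $\|T^n\|$, is only possible because aperiodicity makes $\alpha^{-n}K$ disjoint from $K$, so that restriction via $\chi_{\alpha^{-n}K}$ together with the solid structure of $\mathcal{F}_\eta$ traps all the growth into a term controlled solely by the small perturbation $r$.
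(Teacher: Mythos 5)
Your proposal is correct and follows essentially the same route as the paper's proof: the same $v_k=f\chi_{E_k}+S^{n_k}(g\chi_{E_k})$ construction for $(2)\Rightarrow(1)$, and for $(1)\Rightarrow(2)$ the same use of balls around $\chi_K$, aperiodicity plus solidity (your multiplication by $\chi_{\alpha^{\mp n}K}$ is exactly what the paper's sets $C_k,D_k$ accomplish) to isolate the weight products, and Chebyshev-type estimates with $m_K=\inf_K\eta$ to pass from norm bounds to sup bounds off a small exceptional set. The only differences are bookkeeping: you work with a fixed $\varepsilon$, thresholds $1/2$ and $\delta=\sqrt\varepsilon$, and the infinitude of return sets $N(W,W)$, where the paper uses radii $m_K/4^k$, thresholds $2^{-k}$, and its Lemma~\ref{dis-trans}-style induction to get the increasing $n_k$.
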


\begin{proof}
	
	We prove first $(1) \Rightarrow (2)$. Let $K$ be a compact subset of $X$ and
	$$
	m_K = \inf_{x \in K} \eta(x).
	$$
	Since $\eta (X) \subseteq (0,\infty)$ and $\eta$ is continuous, we must have that $m_K > 0$. Now if $T_{\alpha,w}$ is topologically transitive, there exists some $n_1 \in \mathbb{N}$ such that
	$$
	T_{\alpha,w}^{n_1} \left( B(\chi_K, \tfrac{m_K}{4}) \right) \cap B(\chi_K, \tfrac{m_K}{4}) \neq \emptyset,
	$$
	(where $B(\chi_K, \tfrac{m_K}{4})$ is the open ball with centre in $\chi_K$ and radius $\tfrac{m_K}{4}$).
	
	By Lemma \ref{dis-trans} we can then find some $n_2 \in \mathbb{N}$ with $n_2 > n_1$ such that
	$$
	T_{\alpha,w}^{n_2} \left( B(\chi_K, \tfrac{m_K}{4^2}) \right) \cap B(\chi_K, \tfrac{m_K}{4^2}) \neq \emptyset.
	$$
	
	Proceeding inductively, we can find a strictly increasing sequence $\{n_k\} \subset \mathbb{N}$ such that
	$$
	T_{\alpha,w}^{n_k} \left( B(\chi_K, \tfrac{m_K}{4^k}) \right) \cap B(\chi_K, \tfrac{m_K}{4^k}) \neq \emptyset \quad \text{for each } k \in \mathbb{N}.
	$$
	
	Consequently, for each $k \in \mathbb{N}$ we have
	$$
	B(\chi_K, \tfrac{m_K}{4^k}) \cap S_{\alpha,w}^{n_k} \left( B(\chi_K, \tfrac{m_K}{4^k}) \right) \neq \emptyset.
	$$
	
	Therefore, we can find a sequence $\{f_k\}_k \subseteq \mathcal{F}_{\eta}$ such that for each $k \in \mathbb{N}$,
	$$
	\| f_k - \chi_K \|_{\mathcal{F}_\eta} < \frac{m_K}{4^k} \;\;\;\;\text{and} \quad \left\| S_{\alpha,w}^{n_k}(f_n) - \chi_K \right\|_{\mathcal{F}_\eta} < \frac{m_K}{4^k}.
	$$ As in the proof of \cite[Theorem 1]{solid} , for each $k \in \mathbb{N}$, put
	$$
	A_k = \left\{ x \in K : |f_k(x) - 1|  \geq \frac{1}{2^k} \right\}.
	$$
	
	Then, by solidity of $\mathcal{F}$, we get that for each $k \in \mathbb{N}$,
	$$	m_K \left| \chi_{A_k} (f_k - \chi_K ) \right|
	= m_K \left| \chi_{A_k} (\chi_K f_k - \chi_K) \right| 
	\leq m_K \left| \chi_K (f_k - \chi_K) \right| $$ 
	$$	\leq |\eta \chi_K (f_k - \chi_K)| 
	\leq |\eta (f_k - \chi_K)|, $$
	so, for each $k \in \mathbb{N}$ we obtain that $\chi_{A_k} f_k - \chi_{A_k} \in \mathcal{F}$
	and
	$$
	\left\| \chi_{A_k} f_k - \chi_{A_k} \right\|_{\mathcal{F}} \leq \frac{1}{m_K} \left\| \eta (f_k - \chi_K) \right\|_{\mathcal{F}} < \frac{1}{4^k}.
	$$
	
	By exactly the same arguments as in the proof of \cite[Theorem 1]{solid}, we deduce that
	$$
	\| \chi_{A_{k}}\| _{\mathcal{F}} \leq \tfrac{1}{2^k} \quad \text{for each } k \in \mathbb{N}.
	$$
	
	We also notice that since $\alpha$ is aperiodic, the sequence $\{n_k\}_k \subseteq \mathbb{N}$ can be chosen in such a way that $\alpha^{-n_k}(K) \cap K = \emptyset$ for all $k.$

	Put
	$$
	C_k = \left\{ x \in K : (\eta \circ \alpha^{n_k})(x) \cdot \left( \prod_{j=0}^{n_k - 1} (w \circ \alpha^j)(x) \right)^{-1} \cdot |f_k(x)| > \frac{m_K}{2^k} \right\}.
	$$
	
	Then $\chi_{C_k} (\alpha^{-n_k}(K)) = \lbrace 0 \rbrace $ for all $k$. Hence, we have that 
	$$
	\chi_{C_k} (\eta \circ \alpha^{n_k}) \prod_{j=0}^{n_k - 1} (w \circ \alpha^j)^{-1} \mid f_k \mid =
	$$
	$$= \Biggm|
	\chi_{C_k} (\eta \circ \alpha^{n_k}) \Biggm[ \prod_{j=1}^{n_k} (w \circ \alpha^{n_k - j})^{-1} f_k -( \chi_K \circ \alpha^{n_k}) 
	\Biggm] \Biggm|
	$$
	$$
	\leq
	\Biggm|  (\eta \circ \alpha^{n_k}) 
	\Biggm[ \prod_{j=1}^{n_k} (w \circ \alpha^{n_k - j})^{-1} f_k -(\chi_K \circ \alpha^{n_k}) \Biggm]  \Biggm|
	$$
	$$
	= (\eta \cdot \left| S_{\alpha,w}^{n_k}(f_k) - \chi_K \right|) \circ \alpha^{n_k}
	.$$
	
	Since $\mathcal{F}$ is solid and $\|\cdot\|_{\mathcal{F}}$ is $\alpha$-invariant, we deduce that
	\[
	\chi_{C_k} (\eta \circ \alpha^{n_k})  \prod_{j=0}^{n_k - 1} (w \circ \alpha^j)^{-1}  \left| f_k \right| \in \mathcal{F}
	\quad \text{and} \quad
	\left\| \chi_{C_k} (\eta \circ \alpha^{n_k})  \prod_{j=0}^{n_k - 1} (w \circ \alpha^j)^{-1}  f_k \right\|_{\mathcal{F}} 
	\]
	\[
	\leq \| \eta \cdot (S_{\alpha,w}^{n_k} (f_k) - \chi_K ) \|_{\mathcal{F}} \leq \frac{m_K}{4^k}.
	\]
	However, again by the solidity of $\mathcal{F}$ and the definition of $C_k$,
	\[
	\frac{m_K}{2^k} \| \chi_{C_k} \|_{\mathcal{F}} \leq \| \chi_{C_k} (\eta \circ \alpha^{n_k})  \prod_{j=0}^{n_k - 1} (w \circ \alpha^j)^{-1}  f_k \|_{\mathcal{F}} ,
	\]
	so we must have that $\| \chi_{C_k} \|_{\mathcal{F}} < \frac{1}{2^k}$ for all $k$.
	
	Let $x \in K \setminus (A_k \cup C_k)$. Then $|f_k(x)| < 1- \frac{1}{2^k}$, so
	\[
	(\eta \circ \alpha^{n_k})(x) \left( \prod_{j=0}^{n_k - 1} (w \circ \alpha^j)^{-1}(x) \right)
	\leq \frac{m_K}{2^{k}|f_k(x)|}  \leq \frac{m_K}{2^k-1} .
	\]
	
	Next, for each $k \in \mathbb{N}$, set
	\[
	g_k := S_{\alpha,w}^{n_k} f_k, \quad \text{then } \| g_k - \chi_K \|_{\mathcal{F}_{\eta}} < \frac{m_K}{4^k}
	\]
	and
	\[
	\| T_{\alpha,w}^{n_k} g_k - \chi_K \|_{\mathcal{F}_{\eta}} = \| f_k - \chi_K \|_{\mathcal{F}_{\eta}} < \frac{m_K}{4^k}
	\quad \text{for all } k \in \mathbb{N}.
	\]
	
	Put, for each $k\in\mathbb{N}$ 
	\[
	B_k := \left\{ x \in K : |g_k(x) - 1| \geq \tfrac{1}{2^k} \right\}.
	\]
	Then, by similar arguments as for $A_k$, we can show that
	\[
	\| \chi_{B_k} \|_{\mathcal{F}} \leq \tfrac{1}{2} \quad \text{for each } k \in \mathbb{N}.
	\]
	Further, for each $k \in \mathbb{N}$, set
	\[
	D_k := \left\{ x \in K : (\eta \circ \alpha^{-n_k})(x) \cdot \left( \prod_{j=1}^{n_k} (w \circ \alpha^{-j})(x) \right) \cdot |g_k(x)| > \tfrac{m_K}{2^k} \right\}.
	\]
	By similar arguments as for $C_k$ we can show that
	
	\[
	(\eta \circ \alpha^{-n_k}) \cdot \chi_{D_k} \cdot \left( \prod_{j=1}^{n_k} (w \circ \alpha^{-j}) \right) \cdot g_k \in \mathcal{F}
	\]
	and 
	\[
	\left|\left| (\eta \circ \alpha^{-n_k}) \cdot \chi_{D_k} \cdot \left( \prod_{j=1}^{n_k} (w \circ \alpha^{-j}) \right) \cdot g_k\right|\right|_\mathcal{F} \leq
	\]
	\[
	\leq \|\eta \cdot \left( T_{\alpha,w}^{n_k} g_k - \chi_K \right)\|_{\mathcal{F}} \leq \frac{m_K}{4^k}, \quad \text{for all } k.
	\]
	which further gives that
	\[
	\|\chi_{D_k}\|_{\mathcal{F}} \leq \frac{1}{2^k}, \quad \text{for all } k.
	\]
	
	If $x \in K \setminus (B_k \cup D_k)$, then $|g_k(x)| \geq 1 - \frac{1}{2^k}$, so
	\[
	(\eta \circ \alpha^{-n_k})(x) \cdot \left( \prod_{j=1}^{n_k} (w \circ \alpha^{-j})(x) \right)
	\leq \frac{m_K}{|g_k(x)| \cdot 2^k} \leq \frac{m_K}{2^k - 1}, \quad \text{for all } k \in \mathbb{N}.
	\]
	
	Set
	\[
	E_k := K \setminus (A_k \cup B_k \cup C_k \cup D_k).
	\]
	Then, by exactly the same arguments as in the proof of \cite[Theorem 1]{solid}, one can show that
	\[
	\|\chi_{K \setminus E_k}\|_{\mathcal{F}} \leq \frac{4}{2^k}, \quad \text{for all } k \in \mathbb{N}.
	\]
	Moreover, for each $x \in E_k$ and all $k \in \mathbb{N}$, we have
	\[
	(\eta \circ \alpha^{n_k})(x) \cdot \left( \prod_{j=0}^{n_k - 1} (w \circ \alpha^j)^{-1}(x) \right) \leq \frac{m_K}{2^k - 1}
	\]
	and
	\[
	(\eta \circ \alpha^{-n_k})(x) \cdot \left( \prod_{j=1}^{n_k} (w \circ \alpha^{-j})(x) \right) \leq \frac{m_K}{2^k - 1}.
	\]
	
	This proves $(1) \Rightarrow (2)$. Now we will prove the opposite implication.
	
	To this end, let $\mathcal{O}_1$ and $\mathcal{O}_2$ be two non-empty open subsets of $\mathcal{F}_\eta$. Choose some $f \in \mathcal{O}_1$ and $g \in \mathcal{O}_2$.
	Then there exist some $\varepsilon_1, \varepsilon_2 > 0$ such that the $\varepsilon_1$-neighbourhood of $f$ and $\varepsilon_2$-neighbourhood of $g$ are contained in $\mathcal{O}_1$ and $\mathcal{O}_2$, respectively. Since $f \eta , g \eta \in \mathcal{F}$ and $\mathcal{F}_{b_c} $ is dense in $\mathcal{F}$, we can find some $\tilde{f}, \tilde{g} \in \mathcal{F}_{ bc}$ such that
	\[
	\|\tilde{f} - f \eta\|_{\mathcal{F}} < \varepsilon_1, \quad \text{and} \quad \| \tilde{g} - g \eta\|_{\mathcal{F}} < \varepsilon_2.
	\]
	
	Set $K = \operatorname{supp} \tilde{f} \cup \operatorname{supp} \tilde{g}$, then $K$ is compact. Further,
	\[
	\tilde{f} \eta^{-1} , \tilde{g} \eta^{-1}   \in \mathcal{F}_\eta,
	\]
	in particular, $\tilde{f} \eta^{-1} \in \mathcal{O}_1$ and $\tilde{g}  \eta^{-1} \in \mathcal{O}_2$. In addition, $\operatorname{supp}\tilde{f} \eta^{-1} = \operatorname{supp} \tilde{f}$ and $\operatorname{supp}\tilde{g}\eta^{-1} = \operatorname{supp} \tilde{g}$. Put $\tilde{\tilde{f}} = \tilde{f} \eta^{-1} $ and $\tilde{\tilde{g}} = \tilde{g} \eta^{-1}$. Then $\tilde{\tilde{f}}  \chi_K = \tilde{\tilde{f}} $ and $\tilde{\tilde{g}} \chi_K = \tilde{\tilde{g}}$, $\tilde{\tilde{f}} \in \mathcal{O}_1$, and $\tilde{\tilde{g}} \in \mathcal{O}_2$.
	
	Choose the sequence $\{n_k\}_k \subseteq \mathbb{N}$ and the sequence $\{E_{k}\}_k$ of Borel subsets of $K$ that satisfy the assumptions of 2) with respect to $K$.
	
	Let again
	$$
	m_K := \inf_{x \in K} (\eta(x)) > 0.
	$$
	
	Then $m_K \tilde{\tilde{f}}\chi_{E_k} = m_K  \tilde{f}  \eta^{-1}  \chi_{E_k} \leq \eta  \tilde{f}  \eta^{-1}  \chi_{E_k}=  \tilde{f} \chi_{E_k}
	$ for all $k$ because $ E_k\subseteq K$ for each $ k ,$ so $m_K  \chi_{E_k} \leq \eta  \chi_{E_k} \quad \text{for all } k.$ Since $\mathcal{F}$ is solid and $\|\cdot\|_{\mathcal{F}}$ is $\alpha$-invariant, we deduce that
	
	\[
	\left\| T_{\alpha,w}^{n_k} \left( \tilde{\tilde{f}}  \chi_{E_k} \right) \right\|_{\mathcal{F}_\eta}
	= \left\| \eta  \left( \prod_{j=0}^{n_k - 1} (w \circ \alpha^j) \right)  \left( \tilde{\tilde{f}} \chi_{E_k} \circ \alpha^{n_k} \right) \right\|_{\mathcal{F}} 
	\]
	
	\[
	= \left\| (\eta \circ \alpha^{-n_k})  \left( \prod_{j=1}^{n_k} (w \circ \alpha^{-j}) \right)  \tilde{\tilde{f}} \chi_{E_k} \right\|_{\mathcal{F}}
	\]
	
	\[
	= \frac{1}{m_K} \left\| (\eta \circ \alpha^{-n_k}) \left( \prod_{j=1}^{n_k} (w \circ \alpha^{-j}) \right) m_K \tilde{\tilde{f}} \chi_{E_k} \right\|_{\mathcal{F}}
	\]
	\[
	\leq \frac{1}{m_K} \left\| (\eta \circ \alpha^{-n_k}) \left( \prod_{j=1}^{n_k} (w \circ \alpha^{-j}) \right) \tilde{f} \chi_{E_k} \right\|_{\mathcal{F}}
	\]
	\[
	\leq \frac{1}{m_K} \sup_{x \in E_k} \left( (\eta \circ \alpha^{-n_k})(x) \prod_{j=1}^{n_k} (w \circ \alpha^{-j})(x) \right) \| \widetilde{f} \|_{\mathcal{F}} \xrightarrow{k \to \infty} 0
	\]

	Similarly, we have that
	\[
	m_K \tilde{\tilde{g}}\chi_{E_k} \leq \widetilde{g} \chi_{E_k} \quad \text{for all } k,
	\]
	hence,
	\[
	\left\| S_{\alpha,w}^{n_k} (\widetilde{g} \chi_{E_k}) \right\|_{\mathcal{F}_{\eta}} = 
	\left\| \eta \left( \prod_{j=1}^{n_k} (w \circ \alpha^{-j})^{-1} \right) (\widetilde{g} \chi_{E_k} \circ \alpha^{-n_k}) \right\|_{\mathcal{F}} 
	\]
	\[
	= \frac{1}{m_K} \left\| (\eta \circ \alpha^{n_k}) \left( \prod_{j=0}^{n_k - 1} (w \circ \alpha^j)^{-1} \right) m_K \widetilde{g} \chi_{E_k} \right\|_{\mathcal{F}} 
	\leq \frac{1}{m_K} \left\| (\eta \circ \alpha^{n_k}) \left( \prod_{j=0}^{n_k - 1} (w \circ \alpha^j)^{-1} \right) \widetilde{g} \chi_{E_k} \right\|_{\mathcal{F}} 
	\]
	\[
	\leq \frac{1}{m_K} \sup_{x \in E_k} \left( (\eta \circ \alpha^{n_k})(x) \prod_{j=0}^{n_k - 1} (w \circ \alpha^j)^{-1}(x) \right) \| \widetilde{g} \|_{\mathcal{F}} \xrightarrow{k \to \infty} 0.
	\]

	Moreover,
	\[
	\left\| \tilde{\tilde{f}} \chi_{K \setminus E_k} \right\|_{\mathcal{F}_{\eta}} = \left\|  \widetilde{f} \chi_{K \setminus E_k} \right\|_{\mathcal{F}} 
	\leq \sup_{x \in X} (\widetilde{f}(x)) \left\| \chi_{K \setminus E_k} \right\|_{\mathcal{F}} \to 0 \quad \text{as } k \to \infty,
	\]
	and similarly,
	\[
	\left\| \tilde{\tilde{g}} \chi_{K \setminus E_k} \right\|_{\mathcal{F}_{\eta}} \to 0 \quad \text{as } k \to \infty.
	\]
	
	For each $k \in \mathbb{N}$, set
	\[
	v_k := \tilde{\tilde{f}} \chi_{E_k} + S_{\alpha,w}^{n_k} \left( \tilde{\tilde{g}} \chi_{E_k} \right).
	\]
	
	It follows from above that $v_k\to \tilde{\tilde{f}}$ and
	\[
	T_{\alpha,w}^{n_k}(v_k) \to \tilde{\tilde{g}} \text{ in } \mathcal{F}_{\eta}, \quad \text{as } k \to \infty.
	\]
	
	Since $\tilde{\tilde{f}} \in \mathcal{O}_1$ and $\tilde{\tilde{g}} \in \mathcal{O}_2$, we deduce that there exists some $k_0$ such that
	\[
	v_{k_0} \in \mathcal{O}_1 \quad \text{and} \quad T_{\alpha,w}^{n_{k_0}}(v_{k_0}) \in \mathcal{O}_2,
	\]
	which proves $(2) \Rightarrow (1)$.
\end{proof} 

The next example illustrates the difference between topological transitivity of weighted translations on solid Banach function spaces and topological transitivity of (weighted) translations on weighted solid Banach function spaces.

\begin{example}\label{weighted-primena}
	Let $ X = \mathbb{R} $ and $ \eta $ be the function on $ \mathbb{R} $ from Example \ref{ilustracija} with $ p=1.$ If $\alpha(x) = x - 1$, for all $x \in \mathbb{R}$ and $w$ is the constant function 1 on $\mathbb{R} ,$ then the operator $T_{\alpha, w}$ is topologically transitive on $\mathcal{F}_{\eta} $ by Theorem \ref{transitive}, however, by \cite[Theorem 2.12]{saw} it follows that $T_{\alpha, w}$ is not topologically transitive on $\mathcal{F} .$
\end{example}

\begin{remark}\label{weighted-remark}
	We notice that the special case of our results in this section are weighted translations on weighted Morrey spaces, see \cite[Section 4]{saw}. In fact, for any $ p,q$ with $ 1 \leq q < p < \infty ,$ we can let $ \mathcal{F} = \tilde{\mathcal{M}}_{q}^{p}( \mathbb{R}) $ in Example \ref{weighted-primena}. For the definition and details of the construction of the space $ \tilde{\mathcal{M}}_{q}^{p}( \mathbb{R}) ,$ see \cite[Section 4]{saw}.
\end{remark}

In what follows, given $ N \in \mathbb{N} ,$ we shall consider a finite family $\{\alpha_l\}_{l=1}^N $ of aperiodic homeomorphisms of $X$ which are in addition disjoint aperiodic, that is for each compact subset $K$ of $X$ there exists some $M \in \mathbb{N}$ such that $$ \alpha_s^{n}(K) \cap \alpha_\ell^{n}(K) = \emptyset $$ for all $ n \geq M$ and each distinct $s, \ell \in \{1, \ldots, N\} .$ We will assume that $ \eta $ is a weight with respect to $ \alpha_l $ for each $ \ell \in \{1, \ldots, N\} .$ Further, we let $\{w_l\}_{l=1}^N $ be a finite family of continuous positive functions on $X$ such that $w_l$ and $w_l^{-1}$ are bounded for each $ \ell \in \{1, \ldots, N\}$ and $\{r_l\}_{l=1}^N$ be a finite sequence of natural numbers such that $0<r_1<r_2<\ldots<r_N$.
To simplify notation, for each $ \ell \in \{1, \ldots, N\}$ we let $ T_\ell := T_{\alpha_\ell, w_\ell} .$  Under the above notation and assumptions, we present now the next theorem in this section, which is in fact an extension of \cite[Theorem 2.12]{saw} from the case of solid Banach function spaces to the case of weighted solid Banach function spaces.

\begin{theorem}\label{dis-weighted-transitive}
	The following statements are equivalent.
	\begin{enumerate}
		\item $T_1^{r_{1}},..., T_N^{r_{N}}$ are disjoint topologically transitive on $\mathcal{F}_\eta$.
		
		\item For each compact subset $K$ of $X$, there exists a strictly increasing sequence $\{n_k\}_k \subseteq \mathbb{N}$ and a sequence of Borel subsets $\{E_k\}_k$ of $K$ such that for all $ k \in \mathbb{N} $ and each distinct $ s, \ell \in \{1, \dots, N\} $ we have
		$$\alpha_l^{r_{l} n_{k}}(K) \cap K = \emptyset \quad \text{and} \quad
		\alpha_s^{r_{s} n_{k}}(K) \cap \alpha_\ell^{r_{l} n_{k}}(K) = \emptyset \quad \text{ }(4.1)$$ and such that $$\lim_{k \to \infty} \| \chi_{K \setminus E_k} \|_{\mathcal{F}} = 0. \text{ }(4.2)$$

		Moreover, for all $\ell \in \{1, \dots, N\}$, we have
		$$
		\lim_{k \to \infty} \sup_{x \in E_k} \left[ (\eta \circ \alpha_\ell^{{r_l n_k}})(x) \cdot
		\prod_{j=0}^{{r_l n_k}-1} (w_\ell \circ \alpha_\ell^j)^{-1}(x) \right]  =$$ 
		$$=\lim_{k \to \infty} \sup_{x \in E_k} \left[ (\eta \circ \alpha_\ell^{-{r_l n_k}})(x) \cdot 
		\prod_{j=1}^{{r_l n_k}} (w_\ell \circ \alpha_\ell^{-j})(x) \right] = 0, \text{ }(4.3) $$
		
		and for each distinct $s, \ell \in \{1, \dots, N\}$, it holds that
		$$
		\lim_{k \to \infty} \sup_{x \in E_k} \left[
		(\eta \circ \alpha_\ell^{-r_{l} n_{k}} \circ \alpha_s^{r_{s} n_{k}})(x) \cdot
		\frac{\prod_{j=1}^{r_{l} n_{k}} (w_\ell \circ \alpha_\ell^{-j} \circ \alpha_s^{r_{s} n_{k}})(x)}
		{\prod_{j=0}^{r_{s} n_{k}-1} (w_s \circ \alpha_s^j)(x)}
		\right] = 0. \text{ } (4.4)
		$$
	\end{enumerate}
\end{theorem}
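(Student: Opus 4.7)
My plan is to adapt the proof of Theorem~\ref{transitive} to the disjoint setting, with new exceptional sets to handle the cross-interaction terms that produce condition (4.4). For $(1) \Rightarrow (2)$: fix a compact $K \subseteq X$ and set $m_K := \inf_{x \in K}\eta(x) > 0$. Iterating disjoint topological transitivity in the spirit of Lemma~\ref{dis-trans} applied to $T_1^{r_1}, \dots, T_N^{r_N}$ with shrinking balls $B(\chi_K, m_K/4^k)$, and refining via the disjoint aperiodicity of $\{\alpha_\ell\}$, extract a strictly increasing $\{n_k\}$ such that
\[
B(\chi_K, m_K/4^k) \cap \bigcap_{\ell=1}^N T_\ell^{-r_\ell n_k}(B(\chi_K, m_K/4^k)) \neq \varnothing
\]
for each $k$ and (4.1) holds. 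Choosing $f_k$ in the intersection and setting $g_{k,\ell} := T_\ell^{r_\ell n_k}(f_k)$, both $\|f_k - \chi_K\|_{\mathcal{F}_\eta}$ and each $\|g_{k,\ell} - \chi_K\|_{\mathcal{F}_\eta}$ are less than $m_K/4^k$, while $S_\ell^{r_\ell n_k}(g_{k,\ell}) = f_k$.

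Next, introduce exceptional subsets of $K$, each with threshold $2^{-k}$: $A_k$ where $|f_k - 1|$ is large, $B_{k,\ell}$ where $|g_{k,\ell} - 1|$ is large, sets $C_{k,\ell}$ and $D_{k,\ell}$ associated to the two weights in (4.3) (defined via $g_{k,\ell}$ and $f_k$ respectively, treated exactly as $C_k, D_k$ in Theorem~\ref{transitive}), and, for each distinct $s, \ell$, a new set $G_{k,s,\ell}$ where the (4.4)-weight times $|g_{k,s}|$ exceeds $m_K/2^k$. The bound $\|\chi_{G_{k,s,\ell}}\|_\mathcal{F} \leq 2^{-k}$ follows from the identity $T_\ell^{r_\ell n_k}(S_s^{r_s n_k}(g_{k,s})) = g_{k,\ell}$: translating $\|g_{k,\ell} - \chi_K\|_{\mathcal{F}_\eta} < m_K/4^k$ through the substitution $y \mapsto \alpha_\ell^{-r_\ell n_k}\alpha_s^{r_s n_k}(y)$, using $\alpha_\ell$- and $\alpha_s$-invariance of $\|\cdot\|_\mathcal{F}$ in succession, produces the $\mathcal{F}$-norm of the (4.4)-weight times $g_{k,s}$, and the subtracted $\chi_K$-term vanishes on $K$ by the disjointness $\alpha_s^{r_s n_k}(K) \cap \alpha_\ell^{r_\ell n_k}(K) = \varnothing$ from (4.1). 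Putting $E_k := K \setminus \bigl(A_k \cup \bigcup_\ell (B_{k,\ell} \cup C_{k,\ell} \cup D_{k,\ell}) \cup \bigcup_{s \neq \ell} G_{k,s,\ell}\bigr)$ yields (4.2), while the threshold definitions of $C_{k,\ell}, D_{k,\ell}, G_{k,s,\ell}$ enforce (4.3) and (4.4) pointwise on $E_k$.

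For $(2) \Rightarrow (1)$: given non-empty open $\mathcal{O}, V_1, \ldots, V_N \subseteq \mathcal{F}_\eta$, use density of $\mathcal{F}_{bc}$ to approximate representatives and divide by $\eta$, obtaining $\tilde{\tilde f} \in \mathcal{O}$ and $\tilde{\tilde g}_\ell \in V_\ell$ all supported in a common compact set $K$. Apply (2) to $K$ to get $\{n_k\}$ and $\{E_k\}$, then form
\[
v_k := \tilde{\tilde f}\chi_{E_k} + \sum_{\ell=1}^N S_\ell^{r_\ell n_k}(\tilde{\tilde g}_\ell \chi_{E_k}).
\]
Convergence $v_k \to \tilde{\tilde f}$ in $\mathcal{F}_\eta$ follows from (4.2) combined with the first half of (4.3) applied to each $S_\ell^{r_\ell n_k}(\tilde{\tilde g}_\ell \chi_{E_k})$, as in Theorem~\ref{transitive}. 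For each $m$, expanding $T_m^{r_m n_k}(v_k)$ produces the principal term $\tilde{\tilde g}_m \chi_{E_k} \to \tilde{\tilde g}_m$ by (4.2), an error $T_m^{r_m n_k}(\tilde{\tilde f}\chi_{E_k})$ vanishing by the second half of (4.3), and cross terms $T_m^{r_m n_k}(S_\ell^{r_\ell n_k}(\tilde{\tilde g}_\ell \chi_{E_k}))$ for $\ell \neq m$ whose norms, after the same two-step change of variables, reduce to a constant multiple of $\sup_{x \in E_k}[\text{(4.4)-weight}](x) \cdot \|\tilde g_\ell\|_\mathcal{F}$ and hence tend to $0$. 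Therefore $T_m^{r_m n_k}(v_k) \to \tilde{\tilde g}_m$ in $\mathcal{F}_\eta$ for every $m$, and for $k_0$ large enough $v_{k_0} \in \mathcal{O}$ with $T_m^{r_m n_{k_0}}(v_{k_0}) \in V_m$ for all $m$.

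The principal obstacle is the correct derivation and use of (4.4). Both the bound on the new exceptional set $G_{k,s,\ell}$ in $(1) \Rightarrow (2)$ and the vanishing of cross terms in $(2) \Rightarrow (1)$ hinge on computing the $\mathcal{F}_\eta$-norm of $T_\ell^{r_\ell n_k}\circ S_s^{r_s n_k}$ applied to a $K$-supported function; this requires applying $\alpha_s$- and $\alpha_\ell$-invariance of $\|\cdot\|_\mathcal{F}$ in succession via the composite substitution $x \mapsto \alpha_\ell^{-r_\ell n_k}\alpha_s^{r_s n_k}(x)$, and exploiting the second clause $\alpha_s^{r_s n_k}(K) \cap \alpha_\ell^{r_\ell n_k}(K) = \varnothing$ of (4.1) to legitimately subtract $\chi_K$ inside the norm on $K$. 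This composite shift and the twofold disjointness are what genuinely distinguish the disjoint case from the single-operator Theorem~\ref{transitive}.
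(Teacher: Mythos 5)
Your proposal is correct and follows essentially the same route as the paper's proof: shrinking balls $B(\chi_K, m_K/4^k)$ obtained by iterating disjoint transitivity as in Lemma~\ref{dis-trans}, refinement via disjoint aperiodicity to secure (4.1), exceptional subsets of $K$ with geometric thresholds whose $\mathcal{F}$-norms are controlled by solidity, the composite substitution $\alpha_\ell^{-r_\ell n_k}\circ\alpha_s^{r_s n_k}$ together with the second clause of (4.1) to derive (4.4), and the vector $v_k = \tilde{\tilde f}\chi_{E_k} + \sum_s S_s^{r_s n_k}(\tilde{\tilde g}_s\chi_{E_k})$ for the converse. The only (harmless) difference is organizational: you take a single $f_k$ from the full intersection and set $g_{k,\ell}=T_\ell^{r_\ell n_k}(f_k)$, whereas the paper picks separate vectors $h_{\ell,s,k}$ from the pairwise intersections $S_\ell^{r_\ell n_k}(B_k)\cap S_s^{r_s n_k}(B_k)$ and cites Theorem~\ref{transitive} for the sets handling (4.3).
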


\begin{proof}
	We prove first $(1) \Rightarrow (2)$. If $(1)$ holds, then by Lemma \ref{dis-trans} it is not hard to deduce that we can find a strictly increasing sequence $\{n_k\}_k \subseteq \mathbb{N}$ such that
	\[
	B(X_K, \tfrac{m_K}{4^k}) \cap \left( \bigcap_{\ell=1}^N S_\ell^{r_{l} n_{k}}(B(X_K, \tfrac{m_K}{4^k})) \right) \neq \emptyset
	\quad \text{for all } k \in \mathbb{N},
	\]
	(where again $m_K := \displaystyle\inf_{x \in K} \eta(x) > 0
	$, and $B(X_K, \tfrac{m_K}{4^k})$ is the open ball in $\mathcal{F}_\eta$ with centre in $X_K$ and radius $\tfrac{m_K}{4^k}$). In fact, since $\alpha_1, \dots, \alpha_N$ are disjoint aperiodic, we can construct the sequence $\{n_k\}_k \subseteq \mathbb{N}$ in a such way that (4.1) holds. Now, since, in particular,
	\[
	B\left(X_K, \frac{m_K}{4^k} \right) \cap S_\ell^{r_{l} n_{k}} \left( B\left(X_K, \frac{m_K}{4^k} \right) \right) \neq \emptyset
	\]
	for each $\ell \in \{1, \dots, N\}$ and all $k \in \mathbb{N}$, we can then proceed in a similar way as in the proof of $(1) \Rightarrow (2)$ in Theorem \ref{transitive} to obtain that for each  $ \ell \in \{1, \dots, N\} $ there exists a sequence of Borel subsets $\{R_{k,l}\}_k$ of $K$ such that    \[
	\|\chi_{K \setminus R_{k,l}}\|_{\mathcal{F}} \leq \frac{4}{2^k}, \quad \text{for all } k \in \mathbb{N}.
	\]
	and, moreover, for each $x \in R_{k,l}$ with $k \in \mathbb{N}$ and $ \ell \in \{1, \dots, N\}$, we have
	\[
	(\eta \circ \alpha_\ell^{r_l n_k})(x) \cdot \left( \prod_{j=0}^{r_l n_k - 1} (w_\ell \circ \alpha_\ell^j)^{-1}(x) \right) \leq \frac{m_K}{2^k - 1},
	\]
	
	\[
	(\eta \circ \alpha_\ell^{-r_l n_k})(x) \cdot \left( \prod_{j=1}^{r_l n_k} (w_\ell \circ \alpha_\ell^{-j})(x) \right) \leq \frac{m_K}{2^k - 1}.
	\].
	
	To simplify notation, for each $k \in \mathbb{N}$, we will denote 
	\[
	B_k := B\left( X_K, \frac{m_K}{4^k} \right).
	\]
	Since for each distinct $s, \ell \in \{1, \ldots, N\}$ and each $k \in \mathbb{N}$ we have that
	\[
	S_\ell^{r_{l} n_{k}}(B_k) \cap S_s^{r_{s} n_{k}}(B_k) \neq \emptyset ,
	\]

	for each distinct $s, \ell \in \{1, \ldots, N\}$ there exists a sequence $\{h_{\ell, s, k}\}_k \subseteq \mathcal{F}_\eta$ such that 
	\[
	\| h_{\ell,s,k} - \chi_K \|_{\mathcal{F}_\eta} < \frac{m_K}{4^k},
	\]
	and
	\[
	\left\| T_\ell^{r_{l} n_{k}} \left( S_s^{r_{s} n_{k}}(h_{\ell,s,k}) \right) - \chi_K \right\|_{\mathcal{F}_\eta} < \frac{m_K}{4^k} \quad \text{for all } k \in \mathbb{N}.
	\]
	
	For each distinct $s, \ell \in \{1, \dots, N\}$ and $k \in \mathbb{N},$ set
	
	\[
	F_{k,s,\ell} := \left\{ x \in K : 
	(\eta \circ \alpha^{-r_{l} n_{k}}_\ell \circ \alpha^{r_{s} n_{k}}_s)(x) 
	\cdot
	\frac{
		\displaystyle\prod_{j=1}^{r_{l} n_{k}} (w_\ell \circ \alpha^{-j}_\ell \circ \alpha_s^{r_{s} n_{k}})(x)
	}{
		\displaystyle\prod_{j=0}^{r_{s} n_{k}-1} (w_s \circ \alpha^j_s)(x)
	}
	\cdot
	\left| h_{\ell, s, k}(x) \right| \geq \frac{m_K}{2^k} 
	\right\} .
	\]
	
	Then, since 
	\[
	\alpha^{r_{l} n_{k}}_\ell(K) \cap \alpha^{r_{s} n_{k}}_s(K) = \emptyset \quad \text{for all } k,
	\]
	we get
	\[
	\chi_{F_{k,s,\ell}} \cdot \left( \chi_K \circ \alpha^{-r_{l} n_{k}}_\ell \circ \alpha^{r_{s} n_{k}}_s \right) = 0,
	\quad \text{hence}
	\]
	
	\[
	\left| \chi_{F_{k,s,\ell}} \cdot 
	(\eta \circ \alpha^{-r_{l} n_{k}}_\ell \circ \alpha^{r_{s} n_{k}}_s)
	\cdot
	\frac{
		\displaystyle\prod_{j=1}^{r_{l} n_{k}} (w_\ell \circ \alpha^{-j}_\ell \circ \alpha_s^{r_{s} n_{k}})
	}{
		\displaystyle\prod_{j=0}^{r_{s} n_{k}-1} (w_s \circ \alpha^j_s)
	}
	\cdot
	h_{\ell,s,k} \right|
	= 
	\]

	\[
	= \left| \chi_{F_{k,s,\ell}} \cdot 
	\left( \eta \circ \alpha_\ell^{-r_{l} n_{k}} \circ \alpha_s^{r_{s} n_{k}} \right)
	\cdot
	\left[ \frac{
		\displaystyle\prod_{j=1}^{r_{l} n_{k}} (w_\ell \circ \alpha_\ell^{-j} \circ \alpha_s^{r_{s} n_{k}})
	}{
		\displaystyle\prod_{j=0}^{r_{s} n_{k}-1} (w_s \circ \alpha_s^j)
	}
	\cdot
	h_{\ell,s,k} - \chi_K \circ \alpha_\ell^{-r_{l} n_{k}} \circ \alpha_s^{r_{s} n_{k}} \right]
	\right|
	\]
	
	\[
	\leq \left| 
	\left( \eta \circ \alpha_\ell^{-r_{l} n_{k}} \circ \alpha_s^{r_{s} n_{k}} \right)
	\cdot
	\left[ \frac{
		\displaystyle\prod_{j=1}^{r_{l} n_{k}} (w_\ell \circ \alpha_\ell^{-j} \circ \alpha_s^{r_{s} n_{k}})
	}{
		\displaystyle\prod_{j=0}^{r_{s} n_{k}-1} (w_s \circ \alpha_s^j)
	}
	\cdot
	h_{\ell,s,k} - \chi_K \circ \alpha_\ell^{-r_{l} n_{k}} \circ \alpha_s^{r_{s} n_{k}} \right]
	\right|
	\]
	
	\[
	= \left| \eta \cdot [T_\ell^{r_{l} n_{k}} \right( S_s^{r_{s} n_{k}}(h_{\ell,s,k}) \left) - \chi_K ]\right| \circ \alpha_\ell^{-r_{l} n_{k}} \circ \alpha_s^{r_{s} n_{k}} .
	\]
	
	Again, since $\mathcal{F}$ is solid and $\|\cdot\|_{\mathcal{F}}$ is $\alpha_j$-invariant $\text{for each } j \in \{1, \dots, N\}$, we deduce that 
	\[
	\chi_{F_{k,s,\ell}} \cdot 
	\left( \eta \circ \alpha_\ell^{-r_{l} n_{k}} \circ \alpha_s^{r_{s} n_{k}} \right)
	\cdot 
	\frac{
		\displaystyle\prod_{j=1}^{r_{l} n_{k}} \left( w_\ell \circ \alpha_\ell^{-j} \circ \alpha_s^{r_{s} n_{k}} \right)
	}{
		\displaystyle\prod_{j=0}^{r_{s} n_{k}-1} \left( w_s \circ \alpha_s^j \right)
	}
	\cdot h_{\ell, s, k} \in \mathcal{F}
	\]
	and 
	
	\[
	\frac{m_K}{2^k} \left\| \chi_{F_{k,s,\ell}} \right\|_{\mathcal{F}} \leq
	\]

	\[ \leq
	\left\| 
	\chi_{F_{k,s,\ell}} \cdot 
	(\eta \circ \alpha_\ell^{-r_{l} n_{k}} \circ \alpha_s^{r_{s} n_{k}}) \cdot
	\frac{
		\displaystyle\prod_{j=1}^{r_{l} n_{k}} (w_\ell \circ \alpha_\ell^{-j} \circ \alpha_s^{r_{s} n_{k}})
	}{
		\displaystyle\prod_{j=0}^{r_{s} n_{k}-1} (w_s \circ \alpha_s^j)
	} \cdot
	h_{\ell,s,k}
	\right\|_{\mathcal{F}}
	\]
	
	\[
	\leq  
	\left\|\eta \cdot \left[ 
	T_\ell^{r_{l} n_{k}} \left( S_s^{r_{s} n_{k}}(h_{\ell, s, k}) \right) - \chi_K 
	\right]\right\|_{\mathcal{F}} 
	\leq \frac{m_K}{4^k},
	\]
	
	\[ \text{so }
	\| \chi_{F_{k,s,\ell}} \|_{\mathcal{F}} \leq \frac{1}{2^k} \quad \text{for all } k \in \mathbb{N} \text{ and each distinct } s, \ell \in \{1, \dots, N\}.
	\]
	
	Further, for each \( k \in \mathbb{N} \) and each distinct \( s, \ell \in \{1, \dots, N\} \), set
	\[
	G_{k,s,\ell} = \left\{ x \in K : | h_{\ell,s,k}(x) - 1 | \geq \frac{1}{2^k} \right\}.
	\]
	
	As in the proof of $(1) \Rightarrow (2)$ in Theorem \ref{transitive}, since
	\[
	\| h_{\ell,s,k} - \chi_K \|_{\mathcal{F}_\eta} < \frac{m_K}{4^k},
	\]
	we deduce that
	\[
	\| \chi_{G_{k,s,\ell}} \|_{\mathcal{F}} < \frac{1}{2^k}.
	\]
	
	Moreover, for each \( x \in K \setminus (F_{k,s,\ell} \cup G_{k,s,\ell}) \), we have that $$ |h_{\ell,s,k}(x)| \geq 1 - \frac{1}{2^k}, \text{hence }  $$

	\[
	\left( \eta \circ \alpha^{-r_{l} n_{k}}_\ell \circ \alpha^{r_{s} n_{k}}_s \right)(x)
	\cdot 
	\frac{
		\displaystyle\prod_{j=1}^{r_{l} n_{k}} 
		(w_\ell \circ \alpha^{-j}_\ell \circ \alpha_s^{r_{s} n_{k}})(x)
	}{
		\displaystyle\prod_{j=0}^{r_{s} n_{k}-1}
		(w_s \circ \alpha_s^j)(x)
	}
	\leq
	\frac{m_K}{|h_{\ell,s,k}(x)| \cdot 2^k}
	\leq
	\frac{m_K}{\left(1 - \frac{1}{2^k} \right) \cdot 2^k}
	=
	\frac{m_K}{2^k - 1}
	\]
	
	\noindent
	\text{for all } $k \in \mathbb{N}$ \text{ and each distinct } $s, \ell \in \{1, \dots, N\}$. For each $ k \in \mathbb{N} ,$ set $ E_k := \left(K \setminus  \left( \bigcup_{\ell \neq s}^N (F_{k,s,\ell} \cup G_{k,s,\ell} ) \right) \right) \cup \left( \bigcup_{\ell=1}^N R_{k,l} \right) .$ Then, it is not hard to deduce that (2) holds.
	
	Next, we will prove	(2) $\Rightarrow$ (1). To this end, let $\mathcal{O}, \mathcal{O}_1, \dots, \mathcal{O}_N$ be open non-empty subsets of $\mathcal{F}_\eta$. As in the proof of 
	$(2) \Rightarrow (1)$ in Theorem \ref{transitive}, we can find some $\widetilde{f}, \widetilde{g}_1, \dots, \widetilde{g}_N \in \mathcal{F}_{bc}$ such that 
	$ \tilde{\tilde{f}} := \widetilde{f} \cdot \eta^{-1} \in \mathcal{O}$ and $\widetilde{\widetilde{g}}_\ell := \widetilde{g}_\ell \cdot  \eta^{-1} \in \mathcal{O}_\ell$ 
	for each $\ell \in \{1, \dots, N\}$. 
	
	Let $\{n_k\}_k \subseteq \mathbb{N}$ and $\{E_k\}_k$ be a sequence of Borel subsets of $$K := \mathrm{supp} \, \widetilde{f} 
	\cup \left( \bigcup_{\ell=1}^N \mathrm{supp} \, \widetilde{g}_\ell \right)$$ satisfying the assumptions of (2) with respect to $K$.
	By applying (4.2) and (4.3), it is not hard to deduce by the same arguments as in the proof of 
	$(2) \Rightarrow (1)$ in Theorem \ref{transitive} that 
	$\tilde{\tilde{f}} \chi_{E_k} \to\tilde{\tilde{f}} $ and 
	$\tilde{\tilde{g}}_\ell \chi_{E_k} \to \tilde{\tilde{g}}_\ell$ 
	in $\mathcal{F}_\eta$ as $k \to \infty$ for all $\ell \in \{1, \dots, N\}$. 
	Moreover, $T_\ell^{r_{l} n_{k}} \left( \tilde{\tilde{f}}  \chi_{E_k} \right) \to 0$ 
	and $S_\ell^{r_{s} n_{k}} \left( \tilde{\tilde{g}}_\ell \chi_{E_k} \right) \to 0$ 
	in $\mathcal{F}_\eta$ as $k \to \infty$, for each $\ell \in \{1, \dots, N\}$.\\
	Further, by (4.4), for each distinct $s, \ell \in \{1, \dots, N\}$ and all $ k \in \mathbb{N} ,$ we get
	
	\[
	\left\| T_\ell^{r_{l} n_{k}} \left( S_s^{r_{s} n_{k}} \left( \widetilde{\widetilde{g}}_s \chi_{E_k} \right) \right) \right\|_{\mathcal{F}_\eta}
	=
	\left\| \eta \cdot\left[ \prod_{j=0}^{r_{l} n_{k}-1} w_\ell \circ \alpha_\ell^j 
	\left( \prod_{j=1}^{r_{s} n_{k}} (w_s \circ \alpha_s^j)^{-1} (\widetilde{\widetilde{g}}_s \chi_{E_k}) \circ \alpha_s^{-r_{s} n_{k}} \right) \circ \alpha_\ell^{r_{l} n_{k}} \right] \right\|_{\mathcal{F}} 
	=
	\]
	\[
	= \left\| \left( \eta \circ \alpha_\ell^{-r_{l} n_{k}} \circ \alpha_s^{r_{s} n_{k}} \right) 
	\cdot \frac{ \displaystyle\prod_{j=1}^{r_{l} n_{k}} \left( w_\ell \circ \alpha_\ell^{-j} \circ \alpha_s^{r_{s} n_{k}} \right) }
	{ \displaystyle\prod_{j=0}^{r_{s} n_{k}-1} \left( w_s \circ \alpha_s^j \right) }
	\cdot \widetilde{\widetilde{g}}_s \chi_{E_k} \right\|_{\mathcal{F}}
	\leq 
	\]

	\begin{align*}
		\leq	\frac{1}{m_K} \left\| 
		\left( \eta \circ \alpha_\ell^{-r_{l} n_{k}} \circ \alpha_s^{r_{s} n_{k}} \right) \cdot 
		\frac{
			\displaystyle\prod_{j=1}^{r_{l} n_{k}} (w_\ell \circ \alpha_\ell^{-j} \circ \alpha_s^{r_{s} n_{k}})
		}{
			\displaystyle\prod_{j=0}^{r_{s} n_{k}-1} (w_s \circ \alpha_s^j)
		}
		\cdot \tilde{g}_s  \chi_{E_k} \right\|_{\mathcal{F}} 
		&\leq \\
		\frac{1}{m_K} \sup_{x \in E_k} \left( 
		\left( \eta \circ \alpha_\ell^{-r_{l} n_{k}} \circ \alpha_s^{r_{s} n_{k}} \right)(x) \cdot
		\frac{
			\displaystyle\prod_{j=1}^{r_{l} n_{k}} (w_\ell \circ \alpha_\ell^{-j} \circ \alpha_s^{r_{s} n_{k}})(x)
		}{
			\displaystyle\prod_{j=0}^{r_{s} n_{k}-1} (w_s \circ \alpha_s^j)(x)
		}
		\right)
		\left\| \tilde{g}_s \right\|_\infty \xrightarrow[k \to \infty]{} 0 .
	\end{align*}
	
	For each $k \in \mathbb{N}$, set 
	$$v_k := \tilde{\tilde{f}}  \chi_{E_k} + \sum_{s = 1}^{N} S_s^{r_{s} n_{k}} \left( (\widetilde{\widetilde{g}}_s  \chi_{E_k} \right).$$
	
	It follows that $v_k \to \tilde{\tilde{f}}$ and $T_\ell^{r_{l} n_{k}}(v_k) \to \widetilde{\widetilde{g}}_\ell$ in $\mathcal{F}_\eta$ as $k \to \infty$ for every $\ell \in \{1, \dots, N\}$.	
	
\end{proof}

\bibliographystyle{amsplain}

\begin{thebibliography}{99}
	
	
	\bibitem{aiena} P. Aiena, F. Burderi, S. Triolo,
	\textit{Limits of hypercyclic operators on Hilbert spaces},
	Journal of Mathematical Analysis and Applications,
	Volume 548, Issue 2,
	2025,
	129484,
	ISSN 0022-247X,
	https://doi.org/10.1016/j.jmaa.2025.129484.
	

	
		\bibitem{bmbook} F. Bayart and \'E. Matheron, Dynamics of Linear Operators, Cambridge Tracts in Math. \textbf{179},
	Cambridge University Press, Cambridge, 2009.
	
	\bibitem{cao} X. Cao, \textit{Limits of hypercyclic and supercyclic operator matrices}, J. Aust. Math. Soc. 85 (2008), 367-376.
	
	\bibitem{ChangChen2013} 
	S.-J. Chang, C.-C. Chen, 
	\newblock \textit{Topological mixing for cosine operator functions generated by shifts}, 
	\newblock {Topology Appl.}, {\bf 160}, 2013, pp. 382--386.
	
	\bibitem{Chen2014} 
	 C.-C. Chen, 
	\newblock \textit{Chaos for cosine operator functions generated by shifts}, 
	\newblock { Internat. J. Bifur. Chaos Appl. Sci. Engrg.}, {\bf 24}, 2014, 1450108, 7 pp.
	
	\bibitem{Chen2015} 
	C.-C. Chen, 
	\newblock \textit{Topological transitivity for cosine operator functions on groups}, 
	\newblock { Topology Appl.}, {\bf 191}, 2015, pp. 48--57.
	
	\bibitem{Chen2016} 
	 C.-C. Chen, 
	\newblock \textit{Recurrence of cosine operator functions on groups}, 
	\newblock { Canad. Math. Bull.}, {\bf 59}, 2016, pp. 693--704.
	
\bibitem{comparison} C-C. Chen, \textit{Supercyclic and Ces\'{a}ro hypercyclic weighted translations on groups}, Taiwanese J. Math. 16 (5) 1815 - 1827, 2012. https://doi.org/10.11650/twjm/1500406799
	
 \bibitem{solid} C-C. Chen, S.M. Tabatabaie, {\it Chaotic and Hypercyclic Operators on Solid Banach Function Spaces}, Probl. Anal. Issues Anal. Vol. 9 (27), No. 3, 2020, pp. 83–98, DOI: 10.15393/j3.art.2020.8750
 
 \bibitem{cc09} C-C. Chen and C-H. Chu, {\it Hypercyclicity of weighted convolution operators on homogeneous spaces}, Proc. Amer. Math.
 Soc. \textbf{137} (2009) 2709-2718.
 
 \bibitem{cc11} C-C. Chen and C-H. Chu, {\it Hypercyclic weighted translations on groups}, Proc. Amer. Math.
 Soc. \textbf{139} (2011) 2839-2846.
 
 
 
 \bibitem{cot} C-C. Chen, S. \"Oztop, S. M. Tabatabaie,
 {\it Disjoint dynamics on weighted Orlicz spaces},  Complex Anal. Oper. Theory,  {\bf 14}(72) (2020). https://doi.org/10.1007/s11785-020-01034-x
 
 \bibitem{Fredholm-3} M. Gonz{\'a}lez, F. Le{\'o}n-Saavedra, A. Montes-Rodr{\'i}guez, \emph{Semi-Fredholm Theory: Hypercyclic and Supercyclic Subspaces}, \textbf{81} (2000), Proceedings of the London Mathematical Society
 
 
	\bibitem{gpbook} K-G. Grosse-Erdmann and A. Peris, Linear Chaos, Universitext, Springer, 2011.
	
	\bibitem{gupta} M. Gupta, A. Mundayadan,{\it  Supercyclicity in Spaces of Operators.} Results. Math. {\bf 70}, 95–107 (2016). https://doi.org/10.1007/s00025-015-0463-1
	
	\bibitem{Supercyclic-1} H. M. Hilden, L. J. Wallen, \emph{Some cyclic and non-cyclic vectors of certain operators}, Indiana University Mathematics Journal \textbf{23} (1994), 557-565. 
	
	\bibitem{CAOT} S. Ivkovi\'{c}, S. M. Tabatabaie,  {\it Hypercyclic Generalized Shift Operators},  Complex Anal. Oper. Theory, \textbf{17}, 60 (2023). https://doi.org/10.1007/s11785-023-01376-2
	
	\bibitem{bims} S. Ivković, S.M. Tabatabaie, \textit{Disjoint Linear Dynamical Properties of Elementary
	Operators}. Bull. Iran. Math. Soc. 49, 63 (2023). https://doi.org/10.1007/s41980-
	023-00808-1
	
	
	
		
		
	\bibitem{novo-prvi} Y. Liang, Z. Zhou,  { \it Disjoint supercyclic weighted composition operators}, Bull. Korean Math. Soc. 55(4), (2018), 1137-1147
	
	\bibitem{liang} Y-X. Liang, L. Xia, \textit{ Disjoint supercyclic weighted translations generated by aperiodic elements.} Collect. Math. \textbf{68}, 265–278 (2017). https://doi.org/10.1007/s13348-016-0164-4
	
	
	
	
	\bibitem{kalmes10} T. Kalmes, {\it Hypercyclicity and mixing for cosine operator functions generated by second order	partial differential operators}, J. Math. Anal. Appl. \textbf{365} (2010) 363-375. 
	
	\bibitem{Kostic2012} 
	 M. Kostić, 
	\newblock \textit{Hypercyclic and chaotic integrated C-cosine functions}, 
	\newblock {Filomat}, {\bf 26}, 2012, pp. 1--44  
	
	\bibitem{kumar} V. Kumar, S.M. Tabatabaie, \textit{Hypercyclic sequences of weighted translations on
	hypergroups}. Semigroup Forum 103, 916–934 (2021). https://doi.org/10.1007/s00233-
	021-10226-6
	
   \bibitem{Supercyclic-2} \"O. Martin. \emph{Disjoint hypercyclic and supercyclic composition operators}. PhD, Bowling Green State University,
   Bowling Green, OH, USA, 2011
   
    \bibitem{novo-drugi}  \"O. Martin, R. Sanders, { \it Disjoint supercyclic weighted shifts}, Integr. Equ. Oper. Theory, 85, 191-220, 2016
	
	
	
	
		\bibitem{sa95} H. Salas, {\it Hypercyclic weighted shifts}, Trans. Amer. Math. Soc. \textbf{347} (1995) 993-1004.
	 
	 \bibitem{salas} H. Salas,  { \it Supercyclicity and weighted shifts.} Studia Math. 135(1), 55- 74 (1999).
	 
	\bibitem{saw} Y. Sawano, S.M. Tabatabaie and F. Shahhoseini, \emph{Disjoint dynamics of weighted translations on solid spaces}, Topology Appl. {\bf 298}, 107709, 14 pp. (2021)  DOI:10.1016/J.TOPOL.2021.107709
	
		\bibitem{tabiv2}  S. M. Tabatabaie, S. Ivkovic,  \emph{Linear dynamics of cosine operator functions on solid Banach function spaces}, Positivity, {\bf 25} (2021) 1437–1448.
		
		\bibitem{Moslehian} Ya Wang, Cui Chen, Ze-Hua Zhou, \textit{Disjoint hypercyclic weighted pseudoshift operators generated by different shifts.} Banach J. Math. Anal. 13 (4) 815 - 836, October 2019. https://doi.org/10.1215/17358787-2018-0039
		
		
		\bibitem{banach}  Y. Wang, C. Chen, L. Zhang,  et al. \textit{Supercyclic dynamics of translations on weighted Orlicz spaces.} Banach J. Math. Anal. \textbf{16}, 54 (2022). https://doi.org/10.1007/s43037-022-00206-5
		
		\bibitem{Wang2025} 
		C. Wang, L. Zhang, Y. Ma, 
		\newblock \textit{Topologically multiply recurrent sequence of cosine operators on Orlicz spaces}, 
		\newblock {Discrete Contin. Dyn. Syst. Ser. S}, {\bf 18}(3), 2025, pp. 710--721.
		
	

	
	
	
	
	
	
\end{thebibliography}

\end{document}